 \newtheorem{thm}{Theorem}[section]
 \newtheorem{prop}[thm]{Proposition}
 \theoremstyle{definition}
 \newtheorem{defn}[thm]{Definition}
 \theoremstyle{remark}
 \newtheorem{rem}[thm]{Remark}
 \newtheorem{ex}{Example}
 \numberwithin{equation}{section}
\begin{document}
%
%
%
%
%
%
%
%
%

\title[Fixed Points of contractions]
 {Fixed Points of Meir-Keeler and Leader Contractions with bounded orbits in $b$-Metric Spaces}

\author[Hassan Khandani]{Hassan Khandani}

\address{%
Department of Mathematics \\
Mah. C, Islamic Azad University \\
Mahabad\\
Iran}

\email{hassan.khandani@iau.ac.ir, khandani.hassan@gmail.com}

\thanks{This work was completed with the support of our
\TeX-pert.}
\subjclass{Primary  47H09 ; Secondary 47H10}

\keywords{$b$-metric spaces, non-expansive mappings,  Leader contractions,  Meir-Keeler contractions, Matkowski contractions.}

\date{January 1, 2004}
\begin{abstract}
We establish fixed-point theorems for Meir-Keeler-type contractions in $b$-metric spaces. While Lu et al. demonstrated via an explicit counterexample that classical Meir-Keeler contractions may fail to admit fixed points in this setting, we prove that a natural strengthening of the conditions yields existence results. Specifically, we show that every non-expansive Leader contraction with bounded orbits in a $b$-metric space possesses a fixed point. 

To contextualize our findings, we present a hierarchical diagram illustrating that the fixed-point theory of non-expansive Leader contractions subsumes earlier results, including Meir-Keeler contractions, the primary focus of this work. Our proofs hold in arbitrary $b$-metric spaces \textbf{without relying on the triangle inequality}, requiring instead only the assumption of unique limits. 

This work not only resolves the limitation exposed by Lu et al.'s counterexample but also establishes a unifying framework for future research in the literature.\\
\noindent\textbf{Keywords:} $b$-metric spaces, Meir-Keeler contractions, Leader contractions, bounded orbits, fixed-point theorem
\end{abstract}

\maketitle
\section{Introduction}

The Banach contraction principle has served as the cornerstone of fixed-point theory for over a century. The field has witnessed remarkable developments through successive generalizations, from Rakotch's $\phi$-contractions \cite{rakotch1962note} and Browder's work \cite{browder1968convergence} to the Boyd-Wong \cite{boyd1969nonlinear} and Matkowski \cite{matkowski1975integrable} formulations, culminating in Jachymski's unified theory \cite{jachymski2007nonlinear}. However, the extension to $b$-metric spaces -- pioneered by Bakhtin \cite{bakhtin1989contraction} and Czerwik \cite{czerwik1993contraction} -- encountered a fundamental obstacle when Lu et al. \cite{lu2023question} constructed their striking counterexample: a complete $b$-metric space with a fixed-point-free Meir-Keeler contraction.

This paper addresses this limitation by introducing a new theoretical framework through three key innovations. First, we introduce the class of non-expansive Leader contractions, which simultaneously circumvents Lu et al.'s counterexample while preserving the essential hierarchy of contraction types. Second, we establish the precise containment relationships between these new contractions and classical ones. Third, we develop a complete fixed-point theory for these mappings under natural conditions.

Our approach yields several important insights:

\begin{enumerate}
    \item We prove that non-expansive Leader contractions form a proper subset of Leader contractions, yet still properly contain both Matkowski and Meir-Keeler contractions (demonstrated through explicit examples)
    
    \item We establish a fixed-point theorem for non-expansive Leader contractions under the bounded orbit condition, providing the missing link in $b$-metric space theory
    
    \item We complete the contraction hierarchy diagram for $b$-metric spaces, showing that non-expansive contractions play the same unifying role as Jachymski's contractions \cite{jachymski2011equivalent} do in standard metric spaces
\end{enumerate}

The significance of our work becomes clear when viewed in historical context. While Leader contractions sit atop Jachymski's hierarchy in standard metric spaces, our results demonstrate that their non-expansive counterparts provide the appropriate generalization for $b$-metric spaces. This refined approach maintains the elegant unification of contraction types while overcoming the limitations revealed by Lu et al.'s counterexample.\\
For readers interested in related developments, we recommend the works of \cite{castillo2023some, romaguera2012fixed, kadelburg2016boyd, ding2015some, ding2016some, miculescu2015caristi, mitrovic2019weak, bera2022boyd, nziku2019boyd, singh2022geraghty, kumar2022some} on various aspects of fixed-point theory in generalized metric spaces.\\

The paper's structure reflects this progression: Section \ref{pre-res} provides essential preliminaries on $b$-metric spaces and contraction types. Section~\ref{mai-res} introduces non-expansive Leader contractions and proves our main fixed-point theorem (Theorem \ref{thm_leader}). Section \ref{exams} presents the diagrammatic unification (Figure~\ref{nle:diagram}), supported by Proposition~\ref{proper} and Example~\ref{leader-exm}, which demonstrates the non-expansive condition.
\subsection{Preliminary definitions and results}\label{pre-res}
We need the following definitions and preliminaries in the sequel. We denote the set of real numbers, non-negative real numbers, integers, non-negative integers, and natural numbers by $\mathbb{R},\mathbb{R}^{+}, \mathbb{Z}, \mathbb{Z}^{+}, \mathbb{N}$ respectively.\\  
\begin{defn}\label{metrics}
A function \(\Delta: X \times X \to \mathbb{R}^{+}\) is called a \(b\)-metric  if it satisfies the following conditions:
\begin{enumerate}
    \item \(\Delta(x, y) \geq 0\) for all \(x, y \in X\), and \(\Delta(x, y) = 0\) if and only if \(x = y\),
    \item \(\Delta(x, y) = \Delta(y, x)\) for all \(x, y \in X\).
    \item there exists a constant \(s \geq 1\) such that for all \(x, y, z \in X\),
    \[
    \Delta(x, y) \leq s \left( \Delta(x, z) + \Delta(z, y) \right).
    \]
    In this case, \((X, \Delta)\) is called a \(b\)-metric space.  
\end{enumerate}
A sequence \(\{x_n\}\) in \(X\) converges to a point \(a \in X\) if \(\lim_{n \to \infty} \Delta(x_n, a) = 0\). A sequence \(\{x_n\}\) in \(X\) is called a Cauchy sequence if for every \(\epsilon > 0\), there exists \(m_0 \in \mathbb{N}\) such that \(\Delta(x_n, x_m) < \epsilon\) for all \(n, m > m_0\). The space \((X, \Delta)\) is called complete if every Cauchy sequence in \(X\) converges to a point in \(X\). $A\subset X$ is called a bounded subset if there exists $M>0$ such that $\Delta(x,y)\leq M$ for all $x,y\in X$.
\end{defn}

\begin{defn}[\cite{boyd1969nonlinear}]\label{thm:boyd-wong}
Let \((X, \Delta)\) be a $b$-metric space, and let \(T: X \to X\) be a mapping satisfying the following contraction condition:
\[
\Delta(Tx, Ty) \le \phi(\Delta(x, y)) \quad \text{for all } x, y \in X,
\]
where \(\phi: \mathbb{R}^{+} \to \mathbb{R}^{+}\) satisfies \(\phi(t) < t\) for each \(t > 0\) and \(\phi\) is upper-semicontinuous from the right at each \(t > 0\). 
Then, \(T\) is called Boyd-Wong  $\phi$-contraction.
\end{defn}
\begin{defn}[\cite{matkowski1975integrable}]\label{thm:matkowski}
Let \((X, \Delta)\) be a $b$-metric space, and let \(T: X \to X\) be a mapping satisfying the following contraction condition:
\[
\Delta(Tx, Ty) \le \phi(\Delta(x, y)) \quad \text{for all } x, y \in X,
\]
where \(\phi: \mathbb{R}^{+} \to \mathbb{R}^{+}\) satisfies \(\phi(t) < t\) for each \(t > 0\), \(\phi\) is nondecreasing, and \(\phi^{n}(t) \to 0\) as \(n \to \infty\) for each \(t > 0\). 
Then, \(T\) is called a Matkowski  $\phi$-contraction.
\end{defn}
\begin{defn}\label{geragh_defn}
Let \((X,\Delta)\) be a $b$-metric space, \(\alpha: \mathbb{R}^{+} \to [0,1)\) be a function, and \(T:X\to X\) be a mapping such that
\[
\Delta(Tx,Ty)\le \alpha(\Delta(x,y))\Delta(x,y),
\]
for all $x,y\in X$. Then:
\begin{enumerate}
    \item \(T\) is called a $\alpha$-Geraghty type-I contraction if for any sequence \(\{s_n\}\) in \(\mathbb{R}^{+}\), \(\alpha(s_n)\to 1\) implies \(s_n\to 0\).
    \item \(T\) is called a $\alpha$-Geraghty type-II contraction if for any decreasing sequence \(\{s_n\}\) in \(\mathbb{R}^{+}\), \(\alpha(s_n)\to 1\) implies \(s_n\to 0\).
\end{enumerate}
\end{defn}
\begin{defn}[Picard Sequence]
Let $(X,\Delta)$ be a $b$-metric space and $T: X\to X$ a mapping. For any $x \in X$, the \emph{Picard sequence} (or \emph{orbit}) of $T$ based at $x$ is the sequence $\{T^n x\}_{n=0}^\infty$ defined by:
\[
T^{n+1}x = T(T^n x), \quad \text{where} \quad T^0 x = x.
\]
We denote this sequence by $\mathcal{O}_{T}(x)$, or $\mathcal{O}(x)$ when there is no ambiguity. We say that $T$ has bounded orbits if  $\mathcal{O}(x)$ is a bounded subset for all $x\in X$ in the sense of Definition \ref{metrics}. 
\end{defn}
\begin{defn}Let $(X,\Delta)$ be a metric space, the mapping $T:X\to X$ is non-expansive if 
\[
\Delta(Tx,Ty)\leq \Delta(x,y)\quad \text{for all}\quad x,y\in X.
\]
\end{defn}
Leader \cite{leader1983equivalent} introduced the following class of contractions in metric spaces, and subsequently termed \emph{Leader contractions} in the literature. 
\begin{defn}[\cite{keeler1969theorem}, \cite{leader1983equivalent} ]Let $(X,\Delta)$ be a $b$-metric space, $T:X\to X$ 
\begin{enumerate}
\item is called a Leader contraction if for each $\epsilon>0$ there exist $\delta>0$, and $r\in \mathbb {N}$ such that:
\[
\Delta(x,y)<\epsilon+\delta \rightarrow  \Delta(T^{r}x,T^{r}y)<\epsilon \quad \text{ for all } x,y\in X.
\]
\item  is called a Meir-Keeler contraction if for each $\epsilon>0$ there exist $\delta>0$ such that:
\[
\epsilon \leq \Delta(x,y)<\epsilon+\delta \rightarrow  \Delta(Tx,Ty)<\epsilon \quad \text{ for all } x,y\in X.
\]
\end{enumerate}
\end{defn}
\section{Main Results}\label{mai-res}
Our main contributions establish the fundamental relationships between contraction classes and prove a key fixed-point theorem in $b$-metric spaces.

\begin{prop}\label{proper}(Proper Containments of Contraction Classes)
Let $\mathrm{Ma}$, $\mathrm{BW}$, $\mathrm{MK}$, $\mathrm{Le}$, and $\mathrm{N.Le}$ denote the classes of Matkowski, Boyd-Wong, Meir-Keeler, Leader, and non-expansive Leader contractions, respectively. Then,    $\mathrm{Ma} \subsetneq \mathrm{N.Le}$,  $\mathrm{MK} \subsetneq \mathrm{N.Le}$,   $\mathrm{N.Le} \subsetneq \mathrm{Le}$.

\end{prop}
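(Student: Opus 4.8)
The plan is to handle the three inclusions and the three strictness claims separately. The inclusion $\mathrm{N.Le}\subseteq\mathrm{Le}$ is immediate, since a non-expansive Leader contraction is in particular a Leader contraction; so the real work is to prove $\mathrm{Ma}\subseteq\mathrm{N.Le}$ and $\mathrm{MK}\subseteq\mathrm{N.Le}$ and then to produce three separating examples. For the inclusions I would first show that Matkowski and Meir-Keeler contractions are automatically non-expansive, and then that each is a Leader contraction; none of this uses the triangle inequality, in keeping with the paper's standing convention.

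For $\mathrm{Ma}\subseteq\mathrm{N.Le}$: since $\phi$ is nondecreasing and $\phi(t)<t$ for every $t>0$, we get $\phi(0)\le\phi(t)<t$ for all $t>0$, hence $\phi(0)=0$; therefore $\Delta(Tx,Ty)\le\phi(\Delta(x,y))\le\Delta(x,y)$ for all $x,y$, so $T$ is non-expansive. Applying the contraction inequality to the pair $T^{k}x,T^{k}y$ and using monotonicity of $\phi$, a short induction yields $\Delta(T^{n}x,T^{n}y)\le\phi^{n}(\Delta(x,y))$ for all $n$ and all $x,y$. Now fix $\epsilon>0$, put $\delta=\epsilon$ and $c=2\epsilon$; since $\phi^{n}(c)\to 0$ there is $r\in\mathbb{N}$ with $\phi^{r}(c)<\epsilon$, and then $\Delta(x,y)<c$ forces $\Delta(T^{r}x,T^{r}y)\le\phi^{r}(\Delta(x,y))\le\phi^{r}(c)<\epsilon$, using that $\phi^{r}$ is nondecreasing. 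Hence $T$ is a Leader contraction and $T\in\mathrm{N.Le}$.

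For $\mathrm{MK}\subseteq\mathrm{N.Le}$: given $x\ne y$, set $\epsilon_{0}=\Delta(x,y)>0$; the Meir-Keeler condition at $\epsilon_{0}$ provides $\delta>0$ with $\epsilon_{0}\le\Delta(u,v)<\epsilon_{0}+\delta\Rightarrow\Delta(Tu,Tv)<\epsilon_{0}$, and applying this to $(u,v)=(x,y)$---legitimate since $\Delta(x,y)=\epsilon_{0}$---gives $\Delta(Tx,Ty)<\Delta(x,y)$; together with the trivial case $x=y$, $T$ is non-expansive. For the Leader property take $r=1$: given $\epsilon>0$, pick the Meir-Keeler $\delta$ for this $\epsilon$ and let $\Delta(x,y)<\epsilon+\delta$. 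If $\Delta(x,y)\ge\epsilon$, the Meir-Keeler condition gives $\Delta(Tx,Ty)<\epsilon$; if $\Delta(x,y)<\epsilon$, non-expansiveness gives $\Delta(Tx,Ty)\le\Delta(x,y)<\epsilon$. Either way $\Delta(Tx,Ty)<\epsilon$, so $T\in\mathrm{N.Le}$.

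For the strictness I would give explicit examples (which can be collected in Example~\ref{leader-exm}). For $\mathrm{N.Le}\subsetneq\mathrm{Le}$, take $X=\{0,1,2\}$ with $\Delta(x,y)=|x-y|$ and $T(0)=T(2)=0$, $T(1)=2$: then $T^{2}\equiv 0$, so for every $\epsilon>0$ the Leader condition holds with $r=2$ and any $\delta$, while $\Delta(T1,T0)=2>1=\Delta(1,0)$ shows $T$ is not non-expansive. For both $\mathrm{Ma}\subsetneq\mathrm{N.Le}$ and $\mathrm{MK}\subsetneq\mathrm{N.Le}$ a single example suffices: on $X=[0,1]$ with $\Delta(x,y)=|x-y|$, let $Tx=x/2$ for $x\le 1/2$ and $Tx=x-1/4$ for $x\ge 1/2$. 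Then $T$ is continuous and $1$-Lipschitz (the two affine branches have slopes $1/2$ and $1$, and one checks the estimate across $x=1/2$), hence non-expansive; moreover $T^{n}([0,1])\subseteq[0,2^{-(n-1)}]$ for $n\ge 2$, so $\operatorname{diam}T^{n}(X)\to 0$ and $T$ is a Leader contraction. But $\Delta(Tx,Ty)=\Delta(x,y)$ for all $x,y\in[1/2,1]$: this prevents any $\phi$ with $\phi(t)<t$ on $(0,\infty)$ from dominating $\Delta(Tx,Ty)$ (so $T\notin\mathrm{Ma}$) and it violates the Meir-Keeler condition at $\epsilon=1/4$, e.g. with $x=1/2,y=3/4$ (so $T\notin\mathrm{MK}$). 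The routine parts are the two inclusions and the elementary checks for the $\{0,1,2\}$ example; the step I expect to be the main obstacle is verifying that the $[0,1]$-example is \emph{uniformly} a Leader contraction, i.e. tracking how its two affine branches compose so that $r$ iterations drive all image diameters below a prescribed $\epsilon$ simultaneously for every pair $(x,y)$.
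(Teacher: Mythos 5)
Your proposal is correct, and its skeleton (prove the two inclusions into $\mathrm{N.Le}$, then separate the classes by examples) matches the paper's, but the execution differs at every step. For the inclusions the paper simply cites Jachymski \cite{jachymski2011equivalent} for $\mathrm{Ma},\mathrm{MK}\subset\mathrm{Le}$ and asserts non-expansiveness without argument, whereas you prove both facts from scratch (the $\phi^{r}(2\epsilon)<\epsilon$ argument for Matkowski and the $r=1$ case split for Meir--Keeler make the citation unnecessary). The more substantive divergence is in the properness of $\mathrm{Ma}\subsetneq\mathrm{N.Le}$ and $\mathrm{MK}\subsetneq\mathrm{N.Le}$: the paper deduces it abstractly from the (merely asserted) incomparability of $\mathrm{Ma}$ and $\mathrm{MK}$ --- if neither class contains the other, their common superclass $\mathrm{N.Le}$ must properly contain each --- while you exhibit a single explicit map on $[0,1]$ lying in $\mathrm{N.Le}$ but in neither $\mathrm{Ma}$ nor $\mathrm{MK}$. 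Your route is more informative (it shows $\mathrm{N.Le}\setminus(\mathrm{Ma}\cup\mathrm{MK})\neq\emptyset$, which the incomparability argument does not give) and more self-contained, at the cost of the diameter computation $T^{n}([0,1])=[0,2^{-(n-1)}]$, which does check out and handles the uniformity you were worried about automatically. Finally, for $\mathrm{N.Le}\subsetneq\mathrm{Le}$ the paper invokes its Example~\ref{leader-exm} (a piecewise map on $[0,\tfrac{3}{4}]$ with a jump), while your three-point example with $T^{2}\equiv 0$ does the same job more economically. All of your verifications are sound.
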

\begin{proof}
We have $\mathrm{Ma}, \mathrm{MK} \subset \mathrm{Le}$ (see \cite{jachymski2011equivalent}). Since every Meir-Keeler or Matkowski contraction is non-expansive, we observe that $\mathrm{Ma} \subset \mathrm{N.Le}$ and $\mathrm{MK} \subset \mathrm{N.Le}$. On the other hand, since $\mathrm{Ma}$ and $\mathrm{MK}$ are not comparable, we deduce that each of these inclusions is proper. Example~\ref{leader-exm} exhibits a Leader contraction that is not non-expansive. Consequently, the non-expansive Leader contractions constitute a proper subset of Leader contractions.
\end{proof}
We now establish a fixed point theorem for non-expansive Leader contractions, which form a class that properly contains all contraction types in Jachymski's hierarchy diagram \cite{jachymski2007nonlinear} except for general Leader contractions.
\begin{thm}\label{thm_leader}
Let $(X,\Delta)$ be a complete $b$-metric space with coefficient $s\geq 1$, and let $T:X\to X$ a non-expansive Leader contraction with bounded orbits. Then $T$ has a fixed point. 
\end{thm}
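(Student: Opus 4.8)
The plan is to extract a Cauchy Picard sequence from an arbitrary starting point, use completeness to obtain a limit, and then verify that the limit is a fixed point. Fix $x_0\in X$ and set $x_n=T^nx_0$. The orbit $\mathcal{O}(x_0)$ is bounded, say $\Delta(x_i,x_j)\le M$ for all $i,j$. Since $T$ is non-expansive, the sequence of ``tail diameters'' $d_n:=\sup\{\Delta(x_i,x_j):i,j\ge n\}$ is nonincreasing and bounded below by $0$, hence converges to some $d\ge 0$. The heart of the argument is to show $d=0$: if $d>0$, apply the Leader condition with $\epsilon=d$ to obtain $\delta>0$ and $r\in\mathbb{N}$ with $\Delta(u,v)<d+\delta\Rightarrow\Delta(T^ru,T^rv)<d$. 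For $n$ large enough that $d_n<d+\delta$, every pair $x_i,x_j$ with $i,j\ge n$ satisfies $\Delta(x_i,x_j)<d+\delta$, so $\Delta(x_{i+r},x_{j+r})<d$ for all such $i,j$; taking the supremum over $i,j\ge n$ gives $d_{n+r}\le d$. That alone is not a contradiction (we only get $\le d$), so I would need to squeeze harder: use non-expansiveness together with the Leader inequality to get a \emph{uniform} gap. The standard device is to note $\Delta(x_{i+r},x_{j+r})<d$ gives, by non-expansiveness of $T$, that $\Delta(x_{i+r+k},x_{j+r+k})\le\Delta(x_{i+r},x_{j+r})$, but the key extra input is that the Leader condition is a $\forall x,y$ statement, so applying it to the finitely-many-separated structure and using that $[0,d]$ is covered by finitely many sets of the form $[\epsilon,\epsilon+\delta_\epsilon)$... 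Actually the clean route: suppose $d>0$ and apply Leader at $\epsilon=d/(something)$; more precisely, since $d_{n}\downarrow d$, for the given $\delta$ there is $N$ with $d_N<d+\delta$, and then as above $d_{N+r}\le d$. Now I repeat with a \emph{smaller} target: this does not immediately work because $d$ is a fixed infimum. The correct classical fix (Leader's own argument, and Jachymski's) is: if $d>0$, choose $\epsilon$ with $0<\epsilon<d$, get $\delta,r$ with $\Delta(u,v)<\epsilon+\delta\Rightarrow\Delta(T^ru,T^rv)<\epsilon$; but $d_n\ge d>\epsilon$ for all $n$, so there exist $i,j\ge n$ arbitrarily close to $d_n\ge d>\epsilon+?$... this requires $\epsilon+\delta\le d$ which we cannot guarantee. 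So instead I use the Meir–Keeler-flavoured reformulation: the Leader property at $\epsilon=d$ gives $\Delta(u,v)<d+\delta\Rightarrow\Delta(T^ru,T^rv)<d$, \emph{strictly}. Apply this to a pair $(x_i,x_j)$ with $i,j\ge N$ realizing $\Delta(x_i,x_j)$ within $\eta$ of $d_N\in[d,d+\delta)$; we get $\Delta(x_{i+r},x_{j+r})<d$, and since this holds for \emph{every} such pair while $d_{N+r}$ is the supremum over a set including these, plus non-expansiveness controls the rest, we obtain $d_{N+r}\le d$ and in fact (here is the subtle point I expect to be the main obstacle) one must promote $\le d$ to $<d$ — typically by a compactness/diameter-attainment argument or by iterating the Leader step a bounded number of times to reach strictly below $d$, contradicting $d_n\downarrow d$.

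Once $d=0$ is established, $\{x_n\}$ is Cauchy: given $\epsilon>0$, pick $n$ with $d_n<\epsilon$, so $\Delta(x_i,x_j)<\epsilon$ for all $i,j\ge n$. By completeness $x_n\to p$ for some $p\in X$, and by the unique-limits hypothesis this $p$ is well defined without invoking the triangle inequality.

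It remains to show $Tp=p$. By non-expansiveness, $\Delta(Tx_n,Tp)\le\Delta(x_n,p)\to 0$, so $x_{n+1}=Tx_n\to Tp$; but also $x_{n+1}\to p$, and by uniqueness of limits $Tp=p$. This last step is where the ``unique limits'' assumption (in place of the triangle inequality) is exactly what is needed, since in a general $b$-metric space without the triangle inequality one cannot otherwise conclude that two limits of the same sequence coincide. The main obstacle, as flagged above, is the passage from $d_{n+r}\le d$ to a genuine contradiction with $d>0$; the resolution will use the strictness in the Leader implication together with the fact that non-expansiveness makes $(d_n)$ monotone, so that $d_n\equiv d$ for large $n$ and then the Leader step applied once more to the stabilized tail forces a value strictly below $d$.
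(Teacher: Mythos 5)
Your reduction to showing $d:=\lim_n d_n=0$, where $d_n=\sup\{\Delta(x_i,x_j):i,j\ge n\}$, together with the endgame (Cauchyness from $d=0$, completeness, and $Tx_n\to Tp$ via non-expansiveness plus uniqueness of limits) is sound, but the central step is not closed, and the gap you flag yourself is genuine. Since $d_n\downarrow d$ we have $d_n\ge d$ for all $n$, so the conclusion $d_{N+r}\le d$ obtained from the Leader implication is no contradiction at all: it just says $d_{N+r}=d$. Your proposed repair fails twice over. First, a nonincreasing sequence with limit $d$ need not satisfy $d_n=d$ for large $n$, and even if it did, applying the Leader step to the stabilized tail still only gives $\Delta(x_{i+r},x_{j+r})<d$ for each \emph{individual} pair, and a supremum of quantities each strictly below $d$ can equal $d$ — strict inequality does not survive the supremum. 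Second, retreating to a smaller threshold $\epsilon<d$ does not help, because the associated $\delta(\epsilon)$ may satisfy $\epsilon+\delta(\epsilon)\le d\le\Delta(x_i,x_j)$, so the hypothesis of the Leader implication need never be met by the pairs you care about. As written, the heart of the theorem is not proved.

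The paper circumvents exactly this sup-versus-strictness obstruction by building the key quantity out of \emph{infima}. For pairs of index sequences $(m,n)$ with $m(k)\le n(k)\to\infty$ it sets $\sigma(m,n,p)=\inf_{k\ge0}\Delta(T^{m(k)+p}x,T^{n(k)+p}x)$ and $\sigma(p)=\inf_{(m,n)}\sigma(m,n,p)$; non-expansiveness makes $\sigma(p)$ nonincreasing, so $\sigma(p)\downarrow\sigma$ with $\sigma(p)\ge\sigma$ for every $p$. If $\sigma>0$, a single index $k_0$ with $\Delta(T^{m(k_0)+p}x,T^{n(k_0)+p}x)<\sigma+\delta$ yields, after $r$ iterations, one witness strictly below $\sigma$, and because $\sigma(m+r,n+r,p)$ is an infimum this single witness forces $\sigma(p)\le\sigma(m+r,n+r,p)<\sigma$, contradicting $\sigma(p)\ge\sigma$. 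The Cauchy property is then extracted by comparing the companion supremum $\theta(p)=\sup_{(m,n)}\sigma(m,n,p)$ with $\sigma(p)$. If you wish to keep your tail-diameter framework you must replace $d_n$ by a quantity for which one good pair produces a strict drop — which is precisely what the infimum construction supplies and what your supremum cannot.
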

\begin{proof}Let $x\in X$, and $\mathcal{O}(x) =\{T^{n}x\}_{n=0}^{\infty}$ be the orbit of $T$ at $x$. We define $\Sigma$ as ordered paires of $(m,n)$, where $m,n$ are sequences of non-negative integers, as follows: 
\begin{equation}
 \begin{aligned}
    \Sigma = \Big\{ 
        \big( m, n \big) \;\big|\;
           m=\{a_k\}_{k=0}^{\infty}, n=\{b_k\}_{k=0}^{\infty}, &  m(k)\leq n(k)\quad\text{for all }k\in \mathbb{Z}^{+}\\
 & ,\lim_{k\to\infty} m(k) = \lim_{k\to\infty} n(k) = \infty
          \Big\}
  \end{aligned}
\end{equation}
for $(m,n)\in \Sigma, p\in \mathbb{Z}^{+}$, define
\begin{align}\label{sig_inf}
\sigma(m,n,p) = \inf_{k\geq 0}d(T^{m(k)+p}x,T^{n(k)+p}x),
\end{align}
\begin{equation}\label{sig_def}
\sigma(p) = \inf_{(m,n)\in \Sigma}\sigma(m,n,p).
\end{equation}
\begin{equation}\label{theta_sup}
\theta(p) = \sup_{(m,n)\in \Sigma}\sigma(m,n,p).
\end{equation}
Since $T$ has bounded orbits, for all $(m,n)\in \sigma, p\in \mathbb{Z}^{+} $ we have:
\[
0\le \sigma(m,n,p), \sigma(p),\theta(p)<\infty.
\]
 On the other hand, since $T$ is non-expansive, we have
\[
\sigma(p+1)\leq \sigma(p)\quad \text{for all }p\in \mathbb{Z}^{+}.
\]
This shows that there exists $\sigma\geq 0$ such that 
\[
\sigma(p)\to \sigma \text{ as }p\to \infty\quad\text{ ,and }\sigma(p)\geq  \sigma\quad\text{ for all }p\in \mathbb{Z}^{+}.
\]
We claim that $\sigma =0$. If $\sigma>0$, let $\delta>0$ and $r\in \mathbb{Z}^{+}$ such that for all $x,y\in X$:
\begin{equation}\label{leader_delta}
d(x,y)<\sigma+\delta\rightarrow d(T^{r}x,T^{r}y) <  \sigma
\end{equation}
There exists $p_0\in \mathbb{Z}^{+}$ such that
\[
\sigma(p)<\sigma+\delta\quad\text{ for all }p\ge p_0
\]
Let $p\ge p_{0}$, invoning \eqref{sig_def}, there exists $(m,n)\in \Sigma$ such that
\[
\sigma(m,n,p)<\sigma+\delta
\]
by \eqref{sig_inf} we have:
\[
\inf_{k\geq 0}d(T^{m(k)+p}x,T^{n(k)+p}x)<\sigma+\delta.
\]
This implies that there exists $k_0\ge 0$ such that:
\[
d(T^{m(k_0)+p}x,T^{n(k_0)+p}x)<\sigma+\delta.
\]
By \eqref{leader_delta} we have:
\begin{equation}\label{lead_1}
d(T^{m(k_0)+p+r}x,T^{n(k_0)+p+r}x) < \sigma.
\end{equation}
Therefore
\[
\inf_{k\geq 0} d(T^{m(k)+p+r}x,T^{n(k)+p+r}x)< \sigma.
\]
This implies that:
\begin{equation}\label{lead_3}
\sigma(m+r,n+r,p) < \sigma.
\end{equation}
Since $(m+r,n+r)\in \Sigma$,  invoking \eqref{sig_inf} we deduce that:
\begin{equation}
\sigma(p)\leq \sigma(m+r,n+r,p) < \sigma.
\end{equation}
This is a contradiction that  proves our claim that is $\sigma = 0$.\\
Since $T$ is non-expansive for all $(m,n)\in \Sigma$ we have:
\[
\sigma(m,n,p+1)\leq \sigma(m,n,p)
\]
This implies that 
\[
\sup_{(m,n)\in\Sigma}\sigma(m,n,p+1)\leq  \inf_{(m,n)\in\Sigma} \sigma(m,n,p),
\]
therefore:
\[
0\leq \theta(p+1) \leq \sigma(p).
\]
This implies that 
\begin{equation}
\lim_{p\to\infty}\theta(p) = 0
\end{equation}
We claim that the sequence $\{T^{n}x\}$ is Cauchy. If not, there exists $\epsilon>0$ such that $m(k),n(k)\to \infty$ as $k\to\infty$, $m(k)\leq n(k)$ for all $k\geq 0$ such that:
\[
\epsilon\leq d(T^{m(k)}x,T^{n(k)}x)
\]
Let $p\in \mathbb{N}$, without loss of generality we assume that $m(k),n(k)\geq p$ for all $k\geq 0$. We have:
\begin{equation}
\epsilon\leq d(T^{m(k)}x,T^{n(k)}x) =  d(T^{m(k)-p+p}x,T^{n(k)-p+p}x).
\end{equation}
We observe that $(m_1,n_1) = (m-p, n-p)\in \Sigma$, therefore we get:
\begin{align}
\epsilon&\leq \inf_{k\geq 0}d(T^{m(k)-p+p}x,T^{n(k)-p+p}x)\nonumber\\
& = \inf_{k\geq 0}d(T^{m_1(k)+p}x,T^{n_1(k)+p}x)\nonumber\\
&= \sigma(m_1,n_1,p)\leq \theta(p)\nonumber.
\end{align}
Hence, we get
\[
\epsilon\leq \theta(p).
\]
Since $p$ is arbitrary, taking the limit as $p\to \infty$ we get $\epsilon = 0$. This contradiction proves our claim, that is $\{T^{n}x\}$ is a Cauchy sequence. Since $(X,\Delta)$ is a complete metric space it converges to a point $z\in X$. Since $T$ is continuous $Tz=z$ and our proof is complete.
\end{proof}
\begin{rem}
For non-expansive $T$: (1) $\exists$ fixed point $\iff$ (2) $\exists x$ with bounded $\{T^nx\}$ $\iff$ (3) all orbits bounded. Thus in Theorem \ref{thm_leader}, either $\{T^nx\}$ converges $\forall x$ or $T$ is fixed-point-free.
\end{rem}

\begin{rem}
Theorem \ref{thm_leader} remains valid when $\Delta$'s triangle inequality is replaced by: (i) uniqueness of limits, and (ii) preserved contraction properties.
\end{rem}
\section{Examples}\label{exams}
\begin{ex}\label{lu-exm} 
Lu et al. presented a $b$-metric space $(X,\Delta)$,  where $X=\{s_n=\sum_{k=1}^n 1/k : n\in\mathbb{N}\}$, and  define $T: X\to X$ by $Ts_n = s_{n+1}$ for all $n\in\mathbb{N}$ (See Example 2.3 \cite{lu2023question} for the definition of $\Delta$). Lu et al. proved that $T$ is a Meir-Keeler mapping on the $b$-metric space $(X,\Delta)$ and has no fixed point. We claim that $T$ does not satisfy the conditions of Theorem \ref{thm_leader}, so it does not present a contradiction to our result.\\
 We observe that $T$ satisfies the following condition for all $x,y\in X$ (See Example 2.3 in \cite{lu2023question}, Relation (2.10)).
\begin{equation}\label{bounded_1}
 |x-y|/2 \leq \Delta(x,y) \leq |x-y|,
\end{equation}
where $| x-y|$ is the absolute value of $x-y$ for all $x,y\in X$.  The orbit of $T$ based at $x=1$ is 
\[
\mathcal{O}(1)=\{\Sigma_{i = 1}^{n}{1/i} \}_{n=1}^{\infty}= X. 
\]
By \eqref{bounded_1} $X$ is unbounded. This shows that $T$ has an unbounded orbit and hence does not sarisfies the conditions of Theorem \ref{thm_leader}.
\end{ex}
\begin{ex}\label{leader-exm}
Consider the mapping $T\colon [0,\frac{3}{4}] \to [0,\frac{3}{4}]$ defined by
\[
T(x) = 
\begin{cases} 
\frac{x}{3} & \text{if } x \in \big[0, \frac{1}{2}\big], \\ 
\frac{x}{3} + \frac{1}{4} & \text{if } x \in \big(\frac{1}{2}, \frac{3}{4}\big],
\end{cases}
\]
$T$ is a Leader contraction but not a non-expansive one.

\noindent\textbf{Leader Contraction Verification:}

For $n \geq 3$ and any $x,y \in [0,\frac{3}{4}]$, the iterates satisfy:
\begin{itemize}
\item If $x,y \in [0,\frac{1}{2}]$: $|T^nx - T^ny| = \frac{|x-y|}{3^n}$
\item If $x,y \in (\frac{1}{2},\frac{3}{4}]$: $|T^nx - T^ny| = \frac{|x-y|}{3^n}$
\item For $x \in [0,\frac{1}{2}]$, $y \in (\frac{1}{2},\frac{3}{4}]$:
\[
|T^nx - T^ny| \leq \frac{|x-y|}{3^n} + \frac{1}{4\cdot 3^{n-1}}
\]
\end{itemize}

To verify the Leader condition, for any $\varepsilon > 0$:
\begin{enumerate}
\item Choose $r \in \mathbb{N}$ such that $\frac{1}{4\cdot 3^{r-1}} < \frac{\varepsilon}{2}$
\item Take $\delta =( \frac{3^r}{2}-1)\cdot \varepsilon$.
\end{enumerate}
Then $\Delta(x,y) < \varepsilon + \delta$ implies:
\[
\Delta(T^rx, T^ry) \leq \frac{\varepsilon + \delta}{3^r} + \frac{\varepsilon}{2} < \varepsilon
\]

\noindent\textbf{Non-Expansiveness Failure:}

Consider $x=\frac{1}{2}$ and $y_k=\frac{1}{2}+\frac{1}{k}$ for $k\in\mathbb{N}$:
\[
\lim_{k\to\infty} \Delta(Tx,Ty_k) = \left|\frac{1}{6}-\left(\frac{1}{6}+\frac{1}{4}\right)\right| = \frac{1}{4}
\]
while $\Delta(x,y_k) = \frac{1}{k} \to 0$. This shows $T$ cannot be non-expansive.
\end{ex} 

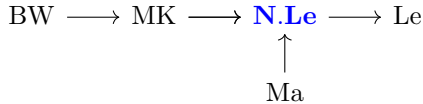
\begin{figure}[h]
\centering
\begin{tikzcd}[row sep=1.5em,column sep=2em]
\mathrm{BW}\arrow[r]& \mathrm{MK} \arrow[r]  \arrow[r]&  \mathrm{\bf \color{blue} N.Le} \arrow[r] & \mathrm{Le}  \\
&&  \mathrm{Ma} \arrow[u] \\
\end{tikzcd}
\caption{Hierarchy of contraction classes in $b$-metric spaces. Blue highlights our new class of non-expansive Leader contractions ($\mathrm{N.Le}$), showing its position between Meir-Keeler ($\mathrm{MK}$) and Leader ($\mathrm{Le}$) contractions, while properly containing Matkowski contractions ($\mathrm{Ma}$). The arrows indicate proper inclusions.}
\label{nle:diagram}
\end{figure}
\section{Conclusion}

We have established a complete fixed-point theory for non-expansive Leader contractions in $b$-metric spaces, resolving the limitation exposed by Lu et al.'s counterexample. Our main contributions are:

\begin{itemize}
\item The introduction of non-expansive Leader contractions ($N.Le$) as a proper subclass of Leader contractions that properly contains both Matkowski and Meir-Keeler contractions (Proposition \ref{proper})

\item A fixed-point theorem (Theorem \ref{thm_leader}) showing that bounded orbits guarantee existence for these contractions

\item The complete hierarchy diagram (Figure \ref{nle:diagram}) demonstrating how non-expansive Leader contractions unify the contraction landscape in $b$-metric spaces
\end{itemize}

The key innovation lies in identifying the precise conditions (non-expansiveness and bounded orbits) that allow fixed-point results while avoiding the pathological cases. This work suggests several natural extensions:

\begin{itemize}
\item Investigation of other function classes that may satisfy similar conditions
\item Applications to integral equations and fractional calculus where $b$-metric spaces naturally appear
\item Extension to partial $b$-metric spaces and other generalized metric structures
\end{itemize}

Our results restore the elegant hierarchy of contraction types in the $b$-metric setting, providing a foundation for further developments in fixed-point theory.
\newpage
\vspace{0.5cm}
\section*{Declaration of Competing Interest}
\noindent The author declares that there are no competing interests.
\section*{Declaration of generative AI and AI-assisted technologies in the writing process}
During the preparation of this work, the author used DeepSeek for language refinement and technical suggestions. The AI-assisted content was rigorously reviewed, edited, and verified by the author, who assumes full responsibility for the accuracy, integrity, and originality of the final publication.\\
\section*{Funding}
The author affirms that the preparation of this manuscript was not supported by any funding, grants, or other financial assistance.  


\subsection*{Acknowledgment}
Many thanks to our \TeX-pert for developing this class file.

\end{document}